\newtheorem{definition}{\bfseries Definition}
\newtheorem{proposition}{\bfseries Proposition}
\newtheorem{example}{\bfseries Example}
\newtheorem{theorem}{\bfseries Theorem}
\newtheorem{lemma}{\bfseries Lemma}
\newtheorem{remark}{\bfseries Remark}
\newtheorem{problem}{\bfseries Problem}
\def\x{\bm{x}}
\def\u{\bm{u}}
\def\f{\bm{f}}
\newcommand{\mm}[1]{\mathcal{#1}}
\newcommand{\mt}[1]{\boldsymbol{#1}}
\newcommand{\bxi}{\bm{\xi}}
\newcommand{\z}{\bm{z}}
\newcommand{\cc}{\mathbf{c}}
\newcommand{\G}{\mathbf{G}}
\newcommand{\A}{\mathbf{A}}
\newcommand{\bb}{\mathbf{b}}
\newcommand{\Zc}{\mathcal{Z}}
\newcommand{\mat}[1]{\begin{bmatrix} #1 \end{bmatrix}}
\newcommand{\hz}[1]{\langle \G^c_{#1},\allowbreak \G^b_{#1},\allowbreak \cc_{#1},\allowbreak \A^c_{#1},\allowbreak \A^b_{#1},\allowbreak \bb_{#1} \rangle}
\newif\ifdraft
\title{\LARGE \bf
Backward Reachability Analysis of Neural Feedback Systems Using Hybrid Zonotopes}
\author{Yuhao Zhang, Hang Zhang and Xiangru Xu
\thanks{Y. Zhang, H. Zhang and X. Xu are with the Department of Mechanical Engineering, University of Wisconsin-Madison, Madison, WI, USA. Email:         {\tt\small \{yuhao.zhang2,hang.zhang,xiangru.xu\}@wisc.edu}.}%
}
\begin{document}
\maketitle
\begin{abstract}
The proliferation of neural networks in safety-critical applications necessitates the development of effective methods to ensure their safety. This letter presents a novel approach for computing the exact backward reachable sets of neural feedback systems based on hybrid zonotopes. It is shown that the input-output relationship imposed by a ReLU-activated neural network can be exactly described by a hybrid zonotope-represented graph set. Based on that, the one-step exact backward reachable set of a neural feedback system is computed as a hybrid zonotope in the closed form. 
In addition, a necessary and sufficient condition is formulated as a mixed-integer linear program to certify whether the trajectories of a neural feedback system can avoid unsafe regions in finite time. Numerical examples are provided to demonstrate the efficiency of the proposed approach.
\end{abstract}

\section{Introduction}\label{sec:intro}
Neural Networks (NNs) have become increasingly prevalent in autonomous systems.  
However, it has been shown that NNs are highly sensitive to even small perturbations in the input space, despite performing well in nominal scenarios \cite{yuan2019adversarial,goodfellow2014explaining}. 
Given the potential safety risks associated with using NNs in safety-critical systems such as robotics \cite{bekey2012neural} and self-driving cars \cite{rao2018deep}, there is a pressing need for developing efficient tools to provide safety guarantees for control systems with NN components.

Reachability analysis of neural feedback systems, which are systems with NN controllers in the feedback loop, has been investigated in recent works  \cite{katz2017reluplex,dutta2019reachability, huang2019reachnn, everett2021reachability, tran2020nnv, Fazlyab2022Safety, zhang22constrainedzonotope, rober2022backward, rober2022hybrid, vincent2021reachable}. The majority of these results focus on forward reachability, which estimate the set of states that can be reached from an initial set \cite{katz2017reluplex,dutta2019reachability, huang2019reachnn, everett2021reachability, tran2020nnv, Fazlyab2022Safety, zhang22constrainedzonotope}. Other works such as \cite{rober2022backward, rober2022hybrid, vincent2021reachable} consider the backward reachability problem by computing a set of states, known as the Backward Reachable Set (BRS), from which 
the system's trajectories can reach a specified \emph{target set} within a finite time. 
When the target set is the set of unsafe states, backward reachability analysis can identify the states that lead to safety violations. 
Compared to forward reachability analysis, backward reachability analysis can be more efficient in finding safety violations for a system, especially in cases where the unsafe states are rare or hard to reach from many initial states. 
Although various techniques have been developed for backward reachability analysis on systems without NNs \cite{mitchell2005time, althoff2021set, yang2022efficient}, they are not directly applicable to neural feedback systems due to the highly nonlinear and nonconvex nature of NNs. 


This letter aims to compute the \emph{exact} BRS of a neural feedback system where the controller is a Feedforward Neural Network (FNN) with Rectified Linear Unit (ReLU) activation functions. The main mathematical tool used is \emph{Hybrid Zonotope} (HZ), which can compactly represent a finite union of polytopic sets \cite{bird2021hybrid,siefert2022robust,bird2022set,zhang2022reachability}. This work builds on our previous work \cite{zhang2022reachability}, which shows that an FNN with ReLU activation functions can be exactly represented by an HZ and provides algorithms to compute the exact and approximated forward reachable sets of neural feedback systems. The contributions of this work are at least threefold: (i) An algorithm with a linear set complexity growth rate is provided to represent the exact input-output relationship of a ReLU-activated FNN as an HZ, which is an improvement on the exponential set complexity growth rate given in \cite{zhang2022reachability}; (ii) Based on the reachability analysis of FNNs in isolation, an algorithm is proposed to compute the exact BRS of neural feedback systems represented by HZs; (iii) A necessary and sufficient condition  formulated as a Mixed-Integer Linear Program (MILP) is provided to certify the safety properties of neural feedback systems. The performance of the proposed method is demonstrated through two numerical examples.

\section{Preliminaries \& Problem Statement}\label{sec:pre}

\textbf{Notation:} 
The $i$-th component of a vector $\x\in \mathbb{R}^n$ is denoted by $x_i$ with $i\in\{1,\dots,n\}$. For a matrix $\A\in \mathbb{R}^{n\times m}$, $\A[i:j,:]$ denotes the matrix constructed by the $i$-th to $j$-th rows of $\A$. The identity matrix is denoted as $\bm{I}$ and $\bm{e}_i$ is the $i$-th column of $\bm{I}$. The vectors and matrices whose entries are all 0 (resp. 1) are denoted as $\bm{0}$ (resp. $\bm{1}$). Given sets $\mathcal{X}\subset \mathbb{R}^n$, $\mathcal{Z}\subset \mathbb{R}^m$ and a matrix $\bm{R}\in\mathbb{R}^{m\times n}$, the Cartesian product of $\mathcal{X}$ and $\mathcal{Z}$ is $\mathcal{X}\times \mathcal{Z} = \{(\x,\z)\;|\;\x\in\mathcal{X},\z\in\mathcal{Z}\}$, the generalized intersection of $\mathcal{X}$ and $\mathcal{Z}$ under $\bm R$ is $\mathcal{X} \cap_{\bm R}\mathcal{Z} = \{\x\in\mathcal{X}\;|\;\bm R \x\in\mathcal{Z}\}$, and the $k$-ary Cartesian power of $\mathcal{X}$ is $\mathcal{X}^k = {\mathcal{X}\times\cdots\times\mathcal{X}}$.

\subsection{Hybrid Zonotopes}\label{sec:zono}

\begin{definition}\cite[Definition 3]{bird2021hybrid}\label{def:sets}
The set $\Zc \subset\mathbb{R}^n$ is a \emph{hybrid zonotope} if there exist $\mathbf{c} \in \mathbb{R}^{n}$, $\mathbf{G}^c \in \mathbb{R}^{n \times n_{g}}$, $\G^b\in \mathbb{R}^{n \times n_{b}}$, $\A^c \in \mathbb{R}^{n_{c}\times{n_g}}$, $\A^b \in\mathbb{R}^{n_{c}\times{n_b}}$, $\bb \in \mathbb{R}^{n_{c}}$ such that
\begin{align*}
    & \mathcal{Z}\!=\!\left\{\mat{\G^c \!\! & \!\!\G^b}\mat{\bm{\xi}^c\\ \bm{\xi}^b}+\cc \left |\!\! \begin{array}{c}
{\mat{\bm{\xi}^c\\ \bm{\xi}^b} \in \mathcal{B}_{\infty}^{n_{g}} \times\{-1,1\}^{n_{b}}}, \\
{\mat{\A^c \!\!& \!\!\A^b}\mat{\bm{\xi}^c\\ \bm{\xi}^b}=\bb}
\end{array}\right.\!\!\!\right\},
\end{align*}
where $\mathcal{B}_{\infty}^{n_g}=\left\{\bm{x} \in \mathbb{R}^{n_g} \;|\;\|\bm x\|_{\infty} \leq 1\right\}$ is the unit hypercube in $\mathbb{R}^{n_{g}}$. The shorthand notation of the hybrid zonotope 
is given by $\mathcal{Z}= \hz{}$.
\end{definition}

Given an HZ $\mathcal{Z} = \hz{}$, the vector $\cc$ is called the \emph{center}, the columns of $\G^b$ are called the \emph{binary generators}, and the columns of $\G^c$ are called the \emph{continuous generators}. 
For simplicity, we define the set $\mathcal{B}(\A^c,\A^b,\bb) = \{(\bm{\xi}^c,\bm{\xi}^b) \in \mathcal{B}^{n_g}_\infty \times \{-1,1\}^{n_b} \;|\; \A^c\bm{\xi}^c + \A^b\bm{\xi}^b = \bb \}$. 

An HZ with $n_b$ binary generators is equivalent to the union of $2^{n_b}$ constrained zonotopes \cite[Theorem 5]{bird2021hybrid}. 
Identities are provided to compute the linear map and generalized intersection \cite[Proposition 7]{bird2021hybrid}, union operation \cite[Proposition 1]{bird2021unions}, and Cartesian product of HZs \cite[Proposition 3.2.5]{bird2022hybrid}.
The emptiness of an HZ can be verified by solving an MILP \cite{bird2021hybrid}.
\begin{lemma}\label{lemma:empty}
Given $\mathcal{Z} = \hz{} \subset \mathbb{R}^n$, $\mathcal{Z} \not = \emptyset$ if and only if  $\min\{\|\bm{\xi}^c\|_{\infty} \;|\; \A^c\bm{\xi}^c + \A^b\bm{\xi}^b  = \bb, \bm{\xi}^c\in \mathbb{R}^{n_g}, \bm{\xi}^b\in\{-1,1\}^{n_b}\} \leq 1.
$
\end{lemma}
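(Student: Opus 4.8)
The plan is to unwind Definition~\ref{def:sets} so that the emptiness test becomes feasibility of the constraint set $\mathcal{B}(\A^c,\A^b,\bb)$, and then to connect that feasibility to the stated mixed-integer program.

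First I would note that, directly from Definition~\ref{def:sets}, the set $\mathcal{Z}$ is the image of $\mathcal{B}(\A^c,\A^b,\bb)$ under the affine map $(\bxi^c,\bxi^b)\mapsto \G^c\bxi^c+\G^b\bxi^b+\cc$. An affine map carries a nonempty set to a nonempty set and the empty set to the empty set, so $\mathcal{Z}\neq\emptyset$ if and only if $\mathcal{B}(\A^c,\A^b,\bb)\neq\emptyset$; that is, if and only if there exist $\bxi^c\in\mathbb{R}^{n_g}$ with $\|\bxi^c\|_\infty\le 1$ and $\bxi^b\in\{-1,1\}^{n_b}$ such that $\A^c\bxi^c+\A^b\bxi^b=\bb$.

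Next I would relate this feasibility condition to $p^\star := \min\{\|\bxi^c\|_\infty \mid \A^c\bxi^c+\A^b\bxi^b=\bb,\ \bxi^c\in\mathbb{R}^{n_g},\ \bxi^b\in\{-1,1\}^{n_b}\}$, adopting the usual convention that $p^\star=+\infty$ when the feasible region is empty. The ``only if'' direction is immediate: any point of $\mathcal{B}(\A^c,\A^b,\bb)$ is feasible for the program and has objective value at most $1$, so $p^\star\le 1$. For the ``if'' direction I would first check that the minimum is actually attained: there are only finitely many binary vectors $\bxi^b$, and for each one that leaves the linear system solvable the objective $\|\bxi^c\|_\infty$ is minimized over a nonempty closed affine subspace; intersecting that subspace with a box $\{\,\bxi^c \mid \|\bxi^c\|_\infty\le t\,\}$ yields a nonempty compact set on which the continuous objective attains its infimum, and taking the best value over the finitely many binary branches shows $p^\star$ is achieved at some $(\bxi^c,\bxi^b)$. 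If $p^\star\le 1$, this minimizer satisfies $\|\bxi^c\|_\infty\le 1$ together with the remaining constraints, hence lies in $\mathcal{B}(\A^c,\A^b,\bb)$, and therefore $\mathcal{Z}\neq\emptyset$.

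The only point requiring a little care is the attainment of the minimum, which is why I would keep the objective written as the $\infty$-norm (whose sublevel sets are compact boxes) and split over the finitely many binary assignments, rather than arguing with an abstract infimum; the rest is a routine restatement of the definition, so I do not anticipate any further difficulty.
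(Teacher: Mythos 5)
Your proof is correct. The paper itself offers no proof of Lemma~\ref{lemma:empty}, simply delegating it to the cited reference \cite{bird2021hybrid}, and your argument is exactly the standard one that citation stands in for: $\mathcal{Z}$ is the affine image of $\mathcal{B}(\A^c,\A^b,\bb)$, so nonemptiness reduces to feasibility of that constraint set, which in turn is equivalent to the MILP's optimal value being at most $1$. Your extra care about attainment of the minimum (splitting over the finitely many binary assignments and using compactness of the $\infty$-norm sublevel sets intersected with each nonempty affine solution set) is a genuine point that the terse statement glosses over, and it is handled correctly.
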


\subsection{Problem Statement}\label{sec:prob}

Consider the following discrete-time linear system:
\begin{equation}\label{dt-sys}
    \x{(t+1)} = \bm A_d \x(t) + \bm B_d \u(t)
\end{equation}
where $\x(t)\in  \mathbb{R}^n,\; \u(t)\in \mathbb{R}^m$ are the state and the control input, respectively. 
 We assume $\x\in\mathcal{X}$ where $\mathcal{X}\subset \mathbb{R}^n$ is called the state set and the controller is given as
$\u(t) = \pi(\x(t))$ 
where $\pi$ is an $\ell$-layer FNN with ReLU activation functions. The neural feedback system consisting of system \eqref{dt-sys} and controller $\pi$ is a closed-loop system denoted as:
\begin{equation}\label{close-sys}
    \x{(t+1)} = \f_{cl}(\x(t)) \triangleq \bm A_d\x(t) + \bm B_d \pi(\x(t)).
\end{equation}

Given a target set $\mathcal{T} \subset \mathcal{X}$ for the closed-loop system \eqref{close-sys}, the set of states that can be mapped into the target set $\mathcal{T}$ by \eqref{close-sys} in exactly $t$ steps is defined as the $t$-step BRS and denoted as $\mathcal{P}_t(\mathcal{T})\triangleq\{\x(0)\in \mathcal{X} | x(k)\in \mathcal{T}, x(k) = \f_{cl}(\x(k-1)),k=1,2,\dots,t\}$.
For simplicity, the one-step BRS is also denoted as $\mathcal{P}(\mathcal{T})$, i.e., $\mathcal{P}(\mathcal{T}) = \mathcal{P}_1(\mathcal{T})$. We assume both the state set $\mathcal{X}$ and the target set $\mathcal{T}$ are represented by HZs in this work.

For the $\ell$-layer FNN controller $\pi$, the $k$-th layer weight matrix and bias vector are denoted as $\bm W^{(k-1)}$ and $\bm v^{(k-1)}$, respectively, where $k=1,\dots,\ell$. Denote $\x^{(k)}$ as the neurons of the $k$-th layer and $n_{k}$ as the dimension of $\x^{(k)}$. Then, for $k=1,\dots,\ell-1$, we have
\begin{align}\label{equ:NN}
\x^{(k)}=\phi(\bm W^{(k-1)}\x^{(k-1)}+ \bm v^{(k-1)})    
\end{align}
where $\x^{(0)} = \x(t)$ and $\phi$ is the \emph{vector-valued} activation function constructed by component-wise repetition of ReLU function, i.e., 
$
\phi(\x) \triangleq [ReLU(x_1) \;\cdots\; ReLU(x_{n})]^T.
$ 
Only the linear map is applied in the last layer, i.e., $$\pi(\x(t)) = \x^{(\ell)} = \bm W^{(\ell-1)}\x^{(\ell-1)}+\bm v^{(\ell-1)}.$$ The total number of hidden neurons is denoted as $N_\pi = n_{1}+\cdots+n_{\ell-1}$.

The following problem will be investigated in this work.

\begin{problem}\label{prob1}
    Given a target set $\mathcal{T}\subset \mathcal{X}$ represented as an HZ and a time horizon $T\in \mathbb{Z}_{>0}$, compute the exact BRS $\mathcal{P}_t(\mathcal{T})$ of the neural feedback system \eqref{close-sys}, for $t=1,2,\dots,T$.
\end{problem}

\section{Exact Backward Reachability Analysis}\label{sec:main}

In this section, we first present a technique that can represent the exact input-output relationship of a ReLU-activated FNN as an HZ-based graph set that has a linear set complexity growth rate. 
Then, based on that, we show if the target set is given as an HZ, the exact BRS of the system \eqref{close-sys} can  be also represented as HZs in closed form.


\subsection{Representation of the Graph of FNNs via HZs} \label{sec:fnn}

 The problem of computing the BRS and invariant set of controlled dynamical systems has been studied in many works, such as \cite{keerthi1987computation,blanchini2008set,anevlavis2019computing}. A commonly-used technique is to abstract the constraints imposed by the dynamic system in the input-output space. For neural feedback systems, the imposed constraints can be identified by finding a proper representation of the input-output relationship of the NN controllers.


One of the major difficulties in analyzing NNs is the composition of nonlinear activation functions \cite{Fazlyab2022Safety}. To simplify the analysis of NNs, quadratic constraints have been utilized to abstract the constraints imposed by the NNs on the pre- and post-activation signals \cite{Fazlyab2022Safety,yin2021stability}. For ReLU-activated FNNs, different types of methods are also proposed to abstract the nonlinear functions with linear constraints  \cite{rossig2021advances}. Building upon these methodologies, our approach employs an HZ to capture the constraints imposed by NNs in an exact manner. Specifically, we denote
$$
\mathcal{G}(\pi,\mathcal{X}) = \{(\x,\u)\;|\; \u = \pi(\x),\x\in\mathcal{X}\} \subset \mathbb{R}^{n+m}
$$ 
as the \emph{graph} of the ReLU-activated FNN $\pi$ over the state space domain $\mathcal{X}$, and we will show that there exists an HZ $\mathcal{H}_\pi = \hz{\pi}$ such that $\mathcal{G}(\pi,\mathcal{X}) = \mathcal{H}_\pi$. 


To that end, we first consider the representation of a scalar-valued ReLU function $x = ReLU(z) = \max\{z,0\}$ over an interval domain $[-\alpha,\beta]$ where $\alpha,\beta\in\mathbb{R}_{>0}$. The graph of the ReLU function over the interval domain is plotted in Fig. \ref{fig:relu}.

\begin{figure}[h]
    \centering
         \includegraphics[width=0.30\textwidth]{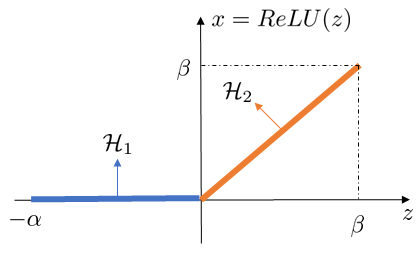}
     \caption{The graph of the ReLU function as the union of two HZs $\mathcal{H}_1$ and $\mathcal{H}_2$, over the interval domain $[-\alpha,\beta]$.}
    \label{fig:relu}
    \vspace{-0.35cm}
\end{figure}

It is obvious that the set of points satisfying the ReLU function over $[-\alpha,\beta]$ form two line segments which can be exactly represented as two HZs $\mathcal{H}_1$ and $\mathcal{H}_2$ given as follows:
\begin{align*}
    \mathcal{H}_1 \!=\! \left\langle\!  \mat{\frac{\alpha}{2}\\0},\emptyset, \mat{\frac{-\alpha}{2}\\0},\emptyset,\emptyset,\emptyset \!\right\rangle,\! \mathcal{H}_2 \!=\!  \left\langle\!  \mat{\frac{\beta}{2}\\\frac{\beta}{2}},\emptyset,\mat{\frac{\beta}{2}\\\frac{\beta}{2}},\emptyset,\emptyset,\emptyset \!\right\rangle\!.
\end{align*}

Using Proposition 1 in \cite{bird2021unions}, the union of $\mathcal{H}_1$ and $\mathcal{H}_2$ can be directly computed as
\begin{equation}\label{equ:H}
    \mathcal{H} = \mathcal{H}_1 \cup \mathcal{H}_2 = \hz{h}
\end{equation}
where
\begin{align*}
    &  \G^c_h \! =\! \mat{\frac{\alpha}{2} & \frac{\beta}{2} & 0& 0&0&0\\ 0 & \frac{\beta}{2} & 0& 0&0&0}, \G^b_h \! =\! \mat{-\frac{\alpha+\beta}{4}\\-\frac{\beta}{4}}, \cc_h\! =\! \mat{\frac{\beta-\alpha}{4} \\ \frac{\beta}{4}},\\
    & \A^c_h \! =\!\mat{1& 0 & 1 & 0 & 0 & 0\\-1& 0 & 0 & 1 & 0 & 0\\0& 1 &0 & 0 & 1 & 0\\0& -1 & 0 & 0 & 0 & 1}, \A^b_h \! =\! \mat{\frac{1}{2}\\\frac{1}{2}\\-\frac{1}{2}\\-\frac{1}{2}}, \bb_h \! =\!\mat{\frac{1}{2}\\\frac{1}{2}\\\frac{1}{2}\\\frac{1}{2}}.
\end{align*}

Using \eqref{equ:H}, we get the exact representation of the graph of the ReLU function over $[-\alpha,\beta]$ using HZ, i.e.,
$
    \mathcal{H} = \{(z,x)\in \mathbb{R}^2\;|\;x = ReLU(z), z\in [-\alpha,\beta]\}.
$
Note that the graph of a ReLU function can be also linearly approximated using intervals, symbolic intervals, and polytopes, as stated in \cite{rossig2021advances}; however, the nonlinear nature of the ReLU function makes it impossible for these convex relaxation-based set representations to exactly represent its graph.

In the following lemma, the analysis described above on the ReLU function is extended to the vector-valued activation function $\phi$ over a domain represented as an HZ.

\begin{lemma}\label{lemma:phi}
    Given a domain represented as an HZ $\mathcal{Z}\subset \mathbb{R}^{n_{k}}$, the graph of the $k$-th layer's vector-valued activation function $\phi: \mathbb{R}^{n_{k}}\rightarrow \mathbb{R}^{n_{k}}$ over $\mathcal{Z}$ can be exactly represented by the following HZ: 
    \begin{equation}\label{equ:g_phi}
        \mathcal{G}(\phi,\mathcal{Z}) = (\bm{P}\cdot\mathcal{H}^{n_{k}})\cap_{[\bm{I}\; \bm{0}]} \mathcal{Z}
    \end{equation}
    where $\bm{P}=[\bm{e}_{2}\; \bm{e}_{4}\;\cdots\; \bm{e}_{2n_{k}} \; \bm{e}_{1}\; \bm{e}_{3}\;\cdots\; \bm{e}_{2n_{k}-1}]^T\in \mathbb{R}^{2n_{k}\times 2n_{k}}$ is a permutation matrix and $\mathcal{H}$ is given in \eqref{equ:H}.
\end{lemma}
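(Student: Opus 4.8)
The plan is to verify the set identity~\eqref{equ:g_phi} directly, by pushing a generic point through the three operations that appear on its right-hand side and reducing the vector-valued map $\phi$ to $n_k$ decoupled scalar ReLU's, whose graph over $[-\alpha,\beta]$ was already shown to be the HZ $\mathcal{H}$ of~\eqref{equ:H}. Since hybrid zonotopes are closed under the Cartesian product \cite{bird2022hybrid} and under linear images and generalized intersections \cite{bird2021hybrid} (the identities recalled in Section~\ref{sec:zono}), the right-hand side of~\eqref{equ:g_phi} is automatically a hybrid zonotope, so only the equality of the represented sets has to be proved.

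The first step is the coordinatewise decoupling of $\phi$: for $\z,\x\in\mathbb{R}^{n_k}$ one has $\x=\phi(\z)$ if and only if $x_i=ReLU(z_i)$ for every $i\in\{1,\dots,n_k\}$. Combining this with the scalar identity $\mathcal{H}=\{(z,x)\mid x=ReLU(z),\ z\in[-\alpha,\beta]\}$ and the definition of the $n_k$-ary Cartesian power yields $\mathcal{H}^{n_k}=\{(z_1,x_1,\dots,z_{n_k},x_{n_k})\mid x_i=ReLU(z_i),\ z_i\in[-\alpha,\beta],\ i=1,\dots,n_k\}$, i.e. the graph of $\phi$ over the hyperrectangle $[-\alpha,\beta]^{n_k}$, but with the pre-activation and post-activation coordinates interleaved pair by pair. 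The second step applies $\bm{P}$: since $\bm{P}$ merely relabels coordinates, gathering the interleaved pairs into two contiguous $n_k$-dimensional blocks (one holding the $z_i$'s, the other the $x_i$'s), the set $\bm{P}\cdot\mathcal{H}^{n_k}$ is exactly that same graph written in block form. The third step applies the generalized intersection: by definition $\mathcal{S}\cap_{[\bm{I}\ \bm{0}]}\mathcal{Z}$ retains precisely the points of $\mathcal{S}$ whose image under $[\bm{I}\ \bm{0}]$ — the leading $n_k$-block — lies in $\mathcal{Z}$, so intersecting $\bm{P}\cdot\mathcal{H}^{n_k}$ in this way replaces the hyperrectangle constraint on that block by membership in $\mathcal{Z}$, which is exactly $\mathcal{G}(\phi,\mathcal{Z})$. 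Chaining these three equalities gives~\eqref{equ:g_phi}.

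The one point that must be made explicit — and which I would record as a standing hypothesis rather than as a real obstacle — is that $\alpha$ and $\beta$ have to be taken large enough that no point of $\mathcal{G}(\phi,\mathcal{Z})$ has a pre-activation coordinate outside $[-\alpha,\beta]$ (for instance by choosing them from an interval over-approximation of the relevant signals, obtainable via Lemma~\ref{lemma:empty} applied to coordinate projections); otherwise the right-hand side of~\eqref{equ:g_phi} equals $\mathcal{G}(\phi,\mathcal{Z}\cap[-\alpha,\beta]^{n_k})$ and the points of $\mathcal{Z}$ falling outside the box are silently dropped. Apart from this, the argument is pure bookkeeping: one must check that $\bm{P}$ really performs the claimed de-interleaving and that its output block ordering is the one selected by $[\bm{I}\ \bm{0}]$ and used in the definition of $\mathcal{G}(\phi,\mathcal{Z})$. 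I do not expect any deeper difficulty, because each of the three set operations involved has the exact closed-form HZ description cited above and each of them preserves the set of represented points, so the whole proof is a chase through those definitions.
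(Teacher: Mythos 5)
Your proposal is correct and follows essentially the same route as the paper's own proof: form the Cartesian power $\mathcal{H}^{n_k}$ to capture the decoupled scalar ReLU graphs, apply the permutation $\bm{P}$ to de-interleave into pre- and post-activation blocks, and use the generalized intersection to replace the box domain by $\mathcal{Z}$, invoking closure of HZs under these operations. The standing hypothesis you flag --- that $\alpha,\beta$ must be large enough that $\mathcal{Z}\subseteq[-\alpha,\beta]^{n_k}$ --- is exactly the first step of the paper's proof, so your treatment matches it.
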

\begin{proof}
Since the HZ $\mathcal{Z}$ is a closed set, we can always find large enough scalars $\alpha,\beta\in \mathbb{R}_{>0}$ such that the interval $\mathcal{I} = [-\alpha \bm{1},\beta \bm{1}] \subset \mathbb{R}^{n_{k}}$ is an enclosure of $\mathcal{Z}$, i.e., $\mathcal{Z}\subseteq \mathcal{I}$.

Let $\z^{(k)}$ denote the input of function $\phi$ and $\x^{(k)}$ denote the output. The graph of $\phi$ over the domain $\mathcal{I}$ is $\mathcal{G}(\phi,\mathcal{I})=\{(\z^{(k)},\x^{(k)})\;|\; \x^{(k)} = \phi(\z^{(k)}),\z^{(k)}\in\mathcal{I}\}\subset \mathbb{R}^{2n_{k}}$. As the vector-valued activation function $\phi$ is constructed by component-wise repetition of ReLU functions, i.e., $x^{(k)}_i = ReLU(z^{(k)}_i)$, we have
$
    [z^{(k)}_1\;x^{(k)}_1\;z^{(k)}_2\;x^{(k)}_2\;\cdots\;z^{(k)}_{n_{k}}\;x^{(k)}_{n_{k}}]^T \in \mathcal{H}^{n_{k}} \subset \mathbb{R}^{2n_{k}}.
$

To reassemble the pairs of input and output elements in the same order of $[{\z^{(k)}}^T,{\x^{(k)}}^T]^T$, we use the permutation matrix $\bm{P}$ and get $[{\z^{(k)}}^T,{\x^{(k)}}^T]^T = [z^{(k)}_1\cdots z^{(k)}_{n_{k}}\;x^{(k)}_1\cdots x^{(k)}_{n_{k}}]^T = \bm{P} [z^{(k)}_1\;x^{(k)}_1\;\cdots\;z^{(k)}_{n_{k}}\;x^{(k)}_{n_{k}}]^T$. 

Since HZs are closed under the linear map and generalized intersection \cite[Proposition 7]{bird2021hybrid}, the graph of $\phi$ over the interval $\mathcal{I}$ is an HZ as $\mathcal{G}(\phi,\mathcal{I}) = \bm{P}\cdot\mathcal{H}^{n_{k}}$. Then, we have 
$\mathcal{G}(\phi,\mathcal{Z}) = \{(\z^{(k)},\x^{(k)})\;|\; \x^{(k)} = \phi(\z^{(k)}),\z^{(k)}\in\mathcal{Z}\} = \mathcal{G}(\phi,\mathcal{I}) \cap_{[\bm{I}\; \bm{0}]} \mathcal{Z} = (\bm{P}\cdot\mathcal{H}^{n_{k}})\cap_{[\bm{I}\; \bm{0}]} \mathcal{Z}$
, which is also an HZ. This completes the proof.
\end{proof}

\begin{remark}
Lemma \ref{lemma:phi} shows that the graph set of a vector-valued ReLU activation function can be exactly represented by an HZ. Lemma 4 in \cite{Fazlyab2022Safety} abstracts the input-output relationship of the ReLU function using quadratic constraints. However, their proposed approach will only provide an over-approximation of the graph set.
\end{remark}

From the structure of the FNN $\pi$ in \eqref{equ:NN}, it is obvious that each layer is a composition of the activation function $\phi$ and the linear map with weight matrix $\bm{W}$ and bias vector $\bm{v}$. Therefore, to construct the HZ representation $\mathcal{H}_\pi = \mathcal{G}(\pi,\mathcal{X})$ for the graph of the entire network $\pi$, we can repeat the procedures described in Lemma \ref{lemma:phi} layer-by-layer and connect the input of the $k$-th layer $\z^{(k)}$ and the output of the $(k-1)$-th layer $\x^{(k-1)}$ with the linear map $\z^{(k)} = \bm{W}^{(k-1)} \x^{(k-1)}+\bm{v}^{(k-1)}$. The details on the iterative construction of the HZ $\mathcal{H}_\pi$ are summarized in Algorithm \ref{alg:1}.

\begin{algorithm}[!ht]
\SetNoFillComment
\caption{Exact graph set computation of FNN via HZs}\label{alg:1}
\KwIn{HZ domain $\mathcal{X}$, number of layers $\ell$, weight matrices $\{\bm W^{(k-1)}\}_{k=1}^{\ell}$, bias vectors $\{\bm v^{(k-1)}\}_{k=1}^{\ell}$, large scalars $\alpha,\beta >0$}
\KwOut{exact graph set as an HZ $\mathcal{H}_\pi = \mathcal{G}(\pi,\mathcal{X})$}
$\mathcal{X}^{(0)}$ $\leftarrow$ $\mathcal{X} = \hz{x}$\\
$\mathcal{H}$ $\leftarrow$ compute the graph of ReLU using \eqref{equ:H}\\
\For{$k \in \{1,2,\dots,\ell-1$\}}{
$\mathcal{Z}^{(k-1)}\!\leftarrow\!\bm{W}^{(k-1)}\mathcal{X}^{(k-1)}\!+\!\bm{v}^{(k-1)}$\tcp*{Input set}
$\mathcal{G}^{(k)}$ $\leftarrow$ $(\bm{P}\cdot\mathcal{H}^{n_{k}})\cap_{[\bm{I}\;\bm{0}]} \mathcal{Z}^{(k-1)}$\tcp*{Using \eqref{equ:g_phi}}
$\mathcal{X}^{(k)}$ $\leftarrow$ $[\bm{0}\;\bm{I}] \cdot \mathcal{G}^{(k)}$ \tcp*{Output set}
}
$\mathcal{X}^{(\ell)}$ $\leftarrow$ $\bm{W}^{(\ell-1)}\mathcal{X}^{(\ell-1)}+\bm{v}^{(\ell-1)}$\tcp*{Last layer}
$\hz{}$ $\leftarrow$ $\mathcal{X}^{(\ell)}$\\
\tcp{Stack input and output}
$\mathcal{H}_\pi$ $\leftarrow$ $\langle \mat{\G^c_x \!&\! \bm{0}\\ \multicolumn{2}{c}{\G^c}},\mat{\G^b_x \!&\! \bm{0}\\ \multicolumn{2}{c}{\G^b}},\mat{\cc_x\\ \cc},\A^c,\A^b,\bb\rangle$\\
\Return{$\mathcal{H}_\pi$}
\end{algorithm}

\begin{theorem}
Given an $\ell$-layer ReLU-activated FNN $\pi:\mathbb{R}^n \rightarrow \mathbb{R}^m$ and an HZ $\mathcal{X}\subset\mathbb{R}^n$, the output of Algorithm \ref{alg:1} $\mathcal{H}_\pi$ is an HZ that can exactly represent the graph set of $\pi$ over the domain $\mathcal{X}$, i.e. $\mathcal{H}_\pi = \mathcal{G}(\pi,\mathcal{X})$.
\end{theorem}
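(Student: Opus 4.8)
The plan is to verify that each line of Algorithm~\ref{alg:1} correctly maintains the intended set-theoretic invariant, and then conclude by induction on the layer index $k$. First I would formalize the loop invariant: after the $k$-th iteration of the \texttt{for} loop, the set $\mathcal{X}^{(k)}$ equals the image of $\mathcal{X}$ under the first $k$ layers of $\pi$, i.e. $\mathcal{X}^{(k)} = \{\x^{(k)} \mid \x^{(k)} \text{ is obtained from some } \x^{(0)}\in\mathcal{X} \text{ via } \eqref{equ:NN}\}$, and moreover $\mathcal{G}^{(k)} = \mathcal{G}(\phi, \mathcal{Z}^{(k-1)})$ is exactly the graph of the $k$-th layer's activation map restricted to the reachable pre-activation set $\mathcal{Z}^{(k-1)}$. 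The base case $k=0$ is immediate from the initialization $\mathcal{X}^{(0)}\leftarrow\mathcal{X}$.

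For the inductive step, I would chain three facts. (i) The line $\mathcal{Z}^{(k-1)} \leftarrow \bm{W}^{(k-1)}\mathcal{X}^{(k-1)} + \bm{v}^{(k-1)}$ gives exactly the set of pre-activation values $\z^{(k)} = \bm{W}^{(k-1)}\x^{(k-1)} + \bm{v}^{(k-1)}$ as $\x^{(k-1)}$ ranges over $\mathcal{X}^{(k-1)}$; this is the affine-map identity for HZs from \cite[Proposition 7]{bird2021hybrid} applied to the inductive hypothesis. (ii) The line $\mathcal{G}^{(k)} \leftarrow (\bm{P}\cdot\mathcal{H}^{n_k})\cap_{[\bm I\;\bm 0]} \mathcal{Z}^{(k-1)}$ produces exactly $\mathcal{G}(\phi,\mathcal{Z}^{(k-1)})$ by a direct invocation of Lemma~\ref{lemma:phi} (with $\mathcal{Z}=\mathcal{Z}^{(k-1)}$), so it consists precisely of the pairs $(\z^{(k)},\x^{(k)})$ with $\x^{(k)}=\phi(\z^{(k)})$ and $\z^{(k)}\in\mathcal{Z}^{(k-1)}$. (iii) The projection $\mathcal{X}^{(k)}\leftarrow[\bm 0\;\bm I]\cdot\mathcal{G}^{(k)}$ extracts the $\x^{(k)}$-component, again using the linear-map identity for HZs, which yields exactly the set of $k$-th layer neuron values reachable from $\mathcal{X}$. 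Composing (i)--(iii) closes the induction. After the loop, the last-layer line $\mathcal{X}^{(\ell)}\leftarrow \bm W^{(\ell-1)}\mathcal{X}^{(\ell-1)}+\bm v^{(\ell-1)}$ applies the purely affine output layer, so $\mathcal{X}^{(\ell)} = \{\pi(\x) \mid \x\in\mathcal{X}\}$. The final stacking step forms the HZ whose generators/constraints are those of $\mathcal{X}^{(\ell)}$ augmented with the continuous and binary generators $\G^c_x,\G^b_x$ and center $\cc_x$ of the original domain $\mathcal{X}$ in the first $n$ coordinates; I would check that this is precisely the Cartesian-product-then-intersect construction (equivalently, a minimal HZ representation of $\{(\x,\pi(\x))\mid \x\in\mathcal{X}\}$), noting that the pre-activation of layer $1$ already encodes $\x\in\mathcal{X}$ through $\mathcal{Z}^{(0)}$, so the $\x$-block and the $\u$-block share the same factors $\bxi$ and the constraints $\A^c,\A^b,\bb$ inherited from $\mathcal{X}^{(\ell)}$ already enforce $\x\in\mathcal{X}$; hence no extra constraints are needed and $\mathcal{H}_\pi = \mathcal{G}(\pi,\mathcal{X})$.

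The main obstacle I anticipate is the bookkeeping in the final stacking line: one must confirm that the factor vector $\bxi=(\bxi^c,\bxi^b)$ appearing in the representation of $\mathcal{X}^{(\ell)}$ is the \emph{same} factor vector that parametrizes the input $\x^{(0)}$ (this holds because $\mathcal{X}^{(0)}$'s factors are never discarded — they propagate through every affine map and intersection — but it requires tracking the generator matrices, not just the sets), so that prepending the rows $[\G^c_x\;\;\bm 0]$, $[\G^b_x\;\;\bm 0]$ and $\cc_x$ genuinely reconstructs $\x$ from those factors while the lower block reconstructs $\pi(\x)$ consistently. A secondary (minor) point is verifying that the choice of the enclosing box $[-\alpha\bm 1,\beta\bm 1]$ in Lemma~\ref{lemma:phi} at each layer does not cause any loss: since $\mathcal{Z}^{(k-1)}$ is a bounded HZ, such $\alpha,\beta$ exist, and the generalized intersection with $\mathcal{Z}^{(k-1)}$ removes exactly the spurious part, so exactness is preserved layer by layer. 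Everything else reduces to the HZ operation identities already cited in the preliminaries.
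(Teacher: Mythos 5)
Your proposal is correct and follows essentially the same route as the paper's own proof: an induction over the layers that invokes Lemma~\ref{lemma:phi} and the HZ closure identities for affine maps and generalized intersections, followed by the observation that the final stacking in Line~8 of Algorithm~\ref{alg:1} is valid because the factors (and equality constraints) of the domain $\mathcal{X}$ are carried unchanged into $\mathcal{X}^{(\ell)}$, so the $\x$-block and $\u$-block are parametrized consistently. In fact you make explicit the one point the paper only asserts in passing --- that verifying the stacking requires tracking the generator/factor bookkeeping rather than just the sets --- so no gap to report.
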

\begin{proof}
For the $\ell$-layer ReLU-activated FNN $\pi$, it is easy to check that the input set $\mathcal{Z}^{(k-1)}$, graph set $\mathcal{G}^{(k)}$ and output set $\mathcal{X}^{(k)}$ of the $k$-th layer activation function $\phi$ are computed iteratively for $k=1,\dots,\ell-1$ in Line 4-6 of Algorithm \ref{alg:1}. For the last layer, only a linear map is applied and the output set of FNN $\pi$ is computed as $\mathcal{X}^{(\ell)}$ in Line 7. Note that from the construction, the equality constraints in the domain set $\mathcal{X}$ are included in $\mathcal{X}^{(\ell)}$. Therefore, in Line 8, $\mathcal{H}_\pi$ stacks the input and output of $\pi$ as $\mathcal{H}_\pi = \{(\x,\u)\;|\;\x\in\mathcal{X},\u = \pi(\x)\} = \mathcal{G}(\pi,\mathcal{X})$, which is an exact representation of the graph set of $\pi$ over $\mathcal{X}$.
\end{proof}

Denote $n_{g,x}$, $n_{b,x}$ and $n_{c,x}$ as the number of continuous generators, binary generators and equality constraints of the HZ $\mathcal{X}$, respectively. The set complexity growth of the graph set $\mathcal{H}_\pi$ is given by
$
    n_{g,\pi}  = n_{g,x} + 6 N_\pi,\; n_{b,\pi} = n_{b,x} + N_\pi,\;
    n_{c,\pi} = n_{c,x} + 5 N_\pi.
$ 
The output set $\mathcal{X}^{(\ell)}$ of the FNN $\pi$ computed in Algorithm \ref{alg:1} has the same set complexity as $\mathcal{H}_\pi$. In our previous work \cite{zhang2022reachability}, it has been shown that a ReLU-activated FNN can be exactly represented by an HZ; however, the set complexity of the computed HZ there will grow exponentially with the number of neurons in the FNN. In comparison. the HZ representation of FNN produced by Algorithm \ref{alg:1} has a linear set complexity growth rate, which makes it more applicable to deep neural networks as demonstrated in Section \ref{sec:sim}.

\subsection{Computation of Exact BRS for Neural Feedback Systems} \label{sec:back}

In this subsection, we will consider the computation of exact BRS for the neural feedback system \eqref{close-sys}. Given a target set represented by an HZ, $\mathcal{T}$, the following theorem provides the closed-form of the one-step BRS, $\mathcal{P}(\mathcal{T})$.


\begin{theorem}\label{thm:brs}
    Given any HZ $\mathcal{X} \subset\mathbb{R}^n$, let $\mathcal{H}_\pi = \hz{\pi}$ be the computed graph set of the FNN $\pi$ over the domain $\mathcal{X}$ using Algorithm \ref{alg:1}, i.e. $\mathcal{H}_\pi = \mathcal{G}(\pi,\mathcal{X})$. Let $\bm D = \mat{\bm A_d & \bm B_d}$. Then, for any target set represented by an HZ $\mathcal{T}= \hz{\tau} \subset\mathbb{R}^n$, the one-step BRS of the neural feedback system \eqref{close-sys} is an HZ given as
    \vspace{-0.1cm}
    \begin{equation}
        \mathcal{P}(\mathcal{T}) = \hz{p}
    \vspace{-0.1cm}
    \end{equation}
    where
    \vspace{-0.1cm}
    \begin{align*}
         \G^c_p &= \mat{\G^c_\pi[1:n,:] & \bm{0}},\; \G^b_p = \mat{\G^b_\pi[1:n,:] & \bm{0}},\\
         \A^c_p &= \mat{\A^c_\pi & \bm{0}\\ \bm{0} & \A^c_\tau \\ \bm{D} \G^c_\pi & -\G^c_\tau },\; \A^b_p = \mat{\A^b_\pi & \bm{0}\\ \bm{0} & \A^b_\tau\\ \bm{D} \G^b_\pi & -\G^b_\tau},\\
         \cc_p &= \cc_\pi[1:n,:],\;\; \bb_p = \mat{\bb_\pi\\\bb_\tau\\\cc_\tau- \bm{D} \cc_\pi}.
    \end{align*}
\end{theorem}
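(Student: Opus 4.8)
The plan is to express the one-step BRS as a sequence of set operations—generalized intersection, Cartesian product, and linear map—applied to the graph set $\mathcal{H}_\pi$ and the target set $\mathcal{T}$, and then invoke the standard HZ identities (\cite[Proposition 7]{bird2021hybrid} for linear maps and generalized intersections, \cite[Proposition 3.2.5]{bird2022hybrid} for Cartesian products) to read off the closed-form matrices. First I would observe that, by definition, $\mathcal{P}(\mathcal{T}) = \{\x \in \mathcal{X} \mid \f_{cl}(\x) \in \mathcal{T}\} = \{\x \mid \exists\, \u = \pi(\x),\ \bm{A}_d\x + \bm{B}_d\u \in \mathcal{T}\}$. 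Since $\mathcal{H}_\pi = \mathcal{G}(\pi,\mathcal{X})$ exactly captures all pairs $(\x,\u)$ with $\u = \pi(\x)$ and $\x \in \mathcal{X}$ (by Theorem~1), this set equals the projection onto the first $n$ coordinates of $\{(\x,\u) \in \mathcal{H}_\pi \mid \bm{D}(\x,\u)^T \in \mathcal{T}\}$, where $\bm{D} = \mat{\bm{A}_d & \bm{B}_d}$. That projection is the linear map $[\bm{I}_n\ \bm{0}]$, and the constraint $\bm{D}(\x,\u)^T \in \mathcal{T}$ is exactly a generalized intersection: $\mathcal{P}(\mathcal{T}) = [\bm{I}_n\ \bm{0}]\big(\mathcal{H}_\pi \cap_{\bm{D}} \mathcal{T}\big)$.

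Next I would apply the HZ identities to this expression. The generalized intersection $\mathcal{H}_\pi \cap_{\bm{D}} \mathcal{T}$ stacks the generator matrices of $\mathcal{H}_\pi$ and $\mathcal{T}$ block-diagonally, keeps the center and generators of $\mathcal{H}_\pi$, and appends the new equality-constraint rows $\bm{D}\G^c_\pi\,\bm{\xi}^c_\pi + \bm{D}\G^b_\pi\,\bm{\xi}^b_\pi - \G^c_\tau\,\bm{\xi}^c_\tau - \G^b_\tau\,\bm{\xi}^b_\tau = \cc_\tau - \bm{D}\cc_\pi$, which enforces that the $\bm{D}$-image of the $\mathcal{H}_\pi$-point coincides with a point of $\mathcal{T}$. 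Then the linear map $[\bm{I}_n\ \bm{0}]$ simply selects the first $n$ rows of the (stacked) generator and center matrices of the intersection, i.e. $\G^c_\pi[1{:}n,:]$, $\G^b_\pi[1{:}n,:]$, $\cc_\pi[1{:}n,:]$, padded with zero columns corresponding to the $\mathcal{T}$-generators (since those do not appear in the first $n$ output coordinates), while the constraint matrices $\A^c_p,\A^b_p,\bb_p$ are left unchanged by the linear map. Matching these against the block structure claimed in the theorem statement verifies the formulas.

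One subtlety I would want to be careful about is whether the projection $[\bm{I}_n\ \bm{0}]\mathcal{H}_\pi$ really recovers $\x$ ranging over exactly $\mathcal{X}$ and not something larger or smaller: this is where I rely on the exactness established in Theorem~1, and in particular on the remark in its proof that the equality constraints defining $\mathcal{X}$ are already carried inside $\mathcal{H}_\pi$ (via $\A^c_\pi,\A^b_\pi,\bb_\pi$ and the stacked center/generator blocks), so no information about the domain is lost when we project. A second point is the direction of the set equality: one inclusion ($\mathcal{P}(\mathcal{T}) \supseteq$ the HZ) follows because any feasible $(\bm{\xi}^c,\bm{\xi}^b)$ for $\mathcal{P}(\mathcal{T})$ yields an $\x$ with $\pi(\x)$ well-defined and $\f_{cl}(\x)\in\mathcal{T}$; the reverse inclusion uses that for every $\x\in\mathcal{P}(\mathcal{T})$ the pair $(\x,\pi(\x))\in\mathcal{H}_\pi$ supplies witnessing factors $\bm{\xi}^c_\pi,\bm{\xi}^b_\pi$, and since $\f_{cl}(\x)\in\mathcal{T}$ there exist $\bm{\xi}^c_\tau,\bm{\xi}^b_\tau$ realizing it, so the appended equality constraint is satisfiable. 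The main obstacle is thus not any hard computation but the bookkeeping: correctly tracking which generator columns survive the final projection (hence the zero-padding in $\G^c_p,\G^b_p$) and confirming that the appended constraint block $\mat{\bm{D}\G^c_\pi & -\G^c_\tau}$, $\mat{\bm{D}\G^b_\pi & -\G^b_\tau}$, $\cc_\tau - \bm{D}\cc_\pi$ is precisely what the generalized-intersection identity of \cite{bird2021hybrid} produces for the map $\bm{D}$ and target $\mathcal{T}$.
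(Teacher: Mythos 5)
Your proposal is correct and follows essentially the same route as the paper: the paper likewise rewrites $\mathcal{P}(\mathcal{T})$ as $\{\x \mid \bm D[\x^T\,\u^T]^T\in\mathcal{T},\ [\x^T\,\u^T]^T\in\mathcal{H}_\pi\}$, applies $\x=[\bm I_n\ \bm 0][\x^T\,\u^T]^T$, and stacks the factors $(\bxi^c_\pi,\bxi^c_\tau)$ and $(\bxi^b_\pi,\bxi^b_\tau)$ to read off the closed form. The only cosmetic difference is that you invoke the cited generalized-intersection and linear-map identities, whereas the paper expands the same substitution by hand.
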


\begin{proof}
    By the definition of the one-step BRS, we have
        \begin{align*}
    \mathcal{P}(\mathcal{T}) & = \{\x \in\mathcal{X}\;|\; \bm f_{cl}(\x) \in \mathcal{T}\}\\ 
    & = \{\x \;|\; \bm A_d\x+\bm B_d\u\in \mathcal{T}, \u = \pi(\x), \x\in \mathcal{X}\}\\
    & = \{\x \;|\; \bm D [\x^T \; \u^T]^T\in \mathcal{T}, [\x^T \; \u^T]^T \in \mathcal{H}_\pi\}.
\end{align*}

Since $\mathcal{T} = \{\cc_\tau+\G^c_\tau\bxi_\tau^c+ \G^b_\tau\bxi^b_\tau\;|\; (\bxi_\tau^c,\bxi_\tau^b)\in \mathcal{B}(\A^c_\tau,\A^b_\tau,\bb_\tau)\}$ and $\mathcal{H}_\pi = \{\cc_\pi+\G^c_\pi\bxi_\pi^c+ \G^b_\pi\bxi^b_\pi\;|\; (\bxi_\pi^c,\bxi_\pi^b)\in \mathcal{B}(\A^c_\pi,\A^b_\pi,\bb_\pi)\}$, we get
$\bm{D}[\x^T \; \u^T]^T = \cc_\tau+\G^c_\tau\bxi_\tau^c+ \G^b_\tau\bxi^b_\tau$ and $[\x^T \; \u^T]^T = \cc_\pi+\G^c_\pi\bxi_\pi^c+ \G^b_\pi\bxi^b_\pi$. 

Using $\x = [\bm{I}_n \; \bm{0}]\cdot [\x^T \; \u^T]^T$, we have 
$
\mathcal{P}(\mathcal{T})= \{[\bm I_n \; \bm{0}](\cc_\pi+\G^c_\pi\bxi_\pi^c+ \G^b_\pi\bxi^b_\pi)\; |\; \bm D (\cc_\pi+\G^c_\pi\bxi_\pi^c+ \G^b_\pi\bxi^b_\pi)  = \cc_\tau+\G^c_\tau\bxi_\tau^c+ \G^b_\tau\bxi^b_\tau, (\bxi_\tau^c,\bxi_\tau^b)\in \mathcal{B}(\A^c_\tau,\A^b_\tau,\bb_\tau), (\bxi_\pi^c,\bxi_\pi^b)\in \mathcal{B}(\A^c_\pi,\A^b_\pi,\bb_\pi) \}.
$ Let $\bxi^c = [(\bxi^{c}_\pi)^T \; (\bxi^c_\tau)^T]^T$ and $\bxi^b = [(\bxi^b_\pi)^T\;(\bxi^b_\tau)^T]^T$. Then,
    \begin{align*}
        \mathcal{P}(\mathcal{T}) = & \{\mat{\G^c_\pi[1\!:\!n,:] \!&\! \bm{0}}\bxi^c+ \mat{\G^b_\pi[1\!:\!n,:] \!&\! \bm{0}}\bxi^b+\\
    &\cc_\pi[1\!:\!n,:]\; |\; (\bxi^c,\bxi^b)\in \mathcal{B}(\mat{\A^c_\pi & \bm{0}\\ \bm{0} & \A^c_\tau \\ \bm D \G^c_\pi & -\G^c_\tau } ,\\
    & \mat{\A^b_\pi & \bm{0}\\ \bm{0} & \A^b_\tau\\ \bm D \G^b_\pi & -\G^b_\tau },\mat{\bb_\pi\\\bb_\tau\\\cc_\tau- \bm D \cc_\pi}) \} \\
     =&  \hz{p}.
    \end{align*}
\end{proof}

To the best of our knowledge, Theorem \ref{thm:brs} is the first result that can compute the exact BRS of a neural feedback system that consists of a linear model and an FNN controller.

Based on Theorem \ref{thm:brs}, the exact $T$-step BRS of  system \eqref{close-sys} can be computed iteratively as follows:
\vspace{-0.1cm}
\begin{equation}\label{equ:t-step}
    \begin{aligned}
    \mathcal{P}_0(\mathcal{T}) =  \mathcal{T},\quad \mathcal{P}_t(\mathcal{T}) 
    = \mathcal{P}(\mathcal{P}_{t-1}(\mathcal{T})),\; t=1,\dots,T.
\end{aligned}
\end{equation}

Assuming that the target set $\mathcal{T}$ has $n_{g,\tau}$ continuous generators, $n_{b,\tau}$ binary generators and $n_{c,\tau}$ equality constraints, the set complexity of the $T$-step BRS computed using \eqref{equ:t-step} and Theorem \ref{thm:brs} is given by $n_{g,p} = T\cdot(n_{g,x}+6N_\pi) + n_{g,\tau}$, $n_{b,p} = T\cdot(n_{b,x}+N_\pi) + n_{b,\tau}$, $n_{c,p} = T\cdot(n_{c,x}+5N_\pi+n) + n_{c,\tau}$, where the subscript $p$ represents the one-step BRS $\mathcal{P}(\mathcal{T})$.

\begin{remark}
    In \cite{rober2022backward}, an algorithm was proposed to over-approximate the BRS of a neural feedback system using convex relaxation of NNs. The result was generalized in \cite{rober2022hybrid} where a hybrid partition scheme was presented to reduce the conservatism induced by the relaxation. Note that the BRSs computed in these works are inexact. 
    In \cite{vincent2021reachable}, a method was  presented to compute the exact BRS of a ReLU-activated FNN by determining the activation pattern, but it is only applicable to NNs in isolation, not neural feedback systems.
\end{remark}

\subsection{Extension to Saturated Control Input Case }
The analysis in the preceding subsections can be readily extended to neural feedback systems with saturated control inputs, using techniques similar to \cite{everett2021reachability}.
Specifically, assume that  
the system \eqref{dt-sys} has interval control input constraints, i.e. $\u\in\mathcal{U}=[\underline{\u},\overline{\u}]$. Then the closed-loop system \eqref{close-sys} becomes
    \begin{equation}\label{close-sys-ulimit}
        \x(t+1) = \bm A_d \x(t) + \bm B_d\;sat_{\underline{\u}}^{\overline{\u}}(\pi (\x(t)))
    \end{equation}
    where the saturation function 
    can be equivalently described by the ReLU functions as $sat_{\underline{\u}}^{\overline{\u}}(\u) = \min\{\max\{\u,\overline{\u}\},\underline{\u}\} = ReLU(-ReLU(\overline{\u}-\u)+\underline{\u}-\overline{\u})+\underline{\u}$. Therefore, the saturated NN controller $\hat{\pi}(\x) = sat_{\underline{\u}}^{\overline{\u}}(\pi (\x))$ is an $(\ell+2)$-layer ReLU-activated FNN. Denote the $k$-th layer weight matrix and bias vector of $\hat{\pi}$ as $\hat{\bm{W}}^{(k-1)}$ and $\hat{\bm{v}}^{(k-1)}$, respectively. Then, for the last three layers we have 
    \begin{equation*}
        \begin{aligned}
        & \hat{\bm{W}}^{(\ell-1)} = -{\bm{W}}^{(\ell-1)},\; \hat{\bm{W}}^{(\ell)} = -\bm{I},\; \hat{\bm{W}}^{(\ell+1)} = \bm{I},\\
        &  \hat{\bm{v}}^{(\ell-1)} = \overline{\u} - {\bm{v}}^{(\ell-1)},\; \hat{\bm{v}}^{(\ell)} = \overline{\u} - \underline{\u},\; \hat{\bm{v}}^{(\ell+1)} = \underline{\u}.\\
        \end{aligned}
    \end{equation*}
    All the other layers are identical to the FNN $\pi$, i.e., $\hat{\bm{W}}^{(k-1)} = {\bm{W}}^{(k-1)},\; \hat{\bm{v}}^{(k-1)} = {\bm{v}}^{(k-1)},\; \forall k=1,\dots,\ell-1$. 
    Then, all preceding results can be directly applied to this modified FNN. 
    However, in contrast to the method in \cite{everett2021reachability}, the two additional layers in our approach do not induce any conservatism in the reachability analysis.

\section{Safety Verification for Neural Feedback Systems via BRS}

In this section, the backward reachability analysis in the preceding section will be utilized for the safety verification of neural feedback systems.

Consider an initial state set $\mathcal{X}_0 \subset \mathcal{X}$ and an unsafe region $\mathcal{O}\subset \mathcal{X}$, both of which are represented as HZs. We consider the unsafe set $\mathcal{O}$ as the target set in Section \ref{sec:main} and suppose that the exact $t$-step BRS of $\mathcal{O}$ can be computed as $\mathcal{P}_t(\mathcal{O})$ by \eqref{equ:t-step}, where $t=1,\dots,T$ with $T$ an arbitrary positive integer. Clearly, if $\mathcal{X}_0$ does not intersect with any $\mathcal{P}_t$ for $t=1,\dots,T$, any state trajectory that starts from $\mathcal{X}_0$ will not enter into the unsafe region $\mathcal{O}$ within $T$ time steps, in other words, the neural feedback system \eqref{close-sys} is safe within $T$ steps. By \cite[Proposition 7]{bird2021hybrid} and Lemma \ref{lemma:empty}, checking the emptiness of the intersection of $\mathcal{X}_0$ and $\mathcal{P}_t$ is equivalent to solving an MILP. 

The safety verification of neural feedback systems via BRSs is summarized in the following proposition whose proof is omitted due to space limitations.




\begin{proposition}\label{prop:verify}
Suppose that an initial state set $\mathcal{X}_0 = \hz{0}\subset \mathcal{X}$ and an unsafe set $\mathcal{O}\subset \mathcal{X}$ are both HZs, and $\mathcal{P}_t = \hz{t}$ is the exact $t$-step BRS of $\mathcal{O}$ where $t=1,\dots,T$ with $T$ an arbitrary positive integer. Then, the state trajectories of the neural feedback system \eqref{close-sys} starting from $\mathcal{X}_0$ can avoid the unsafe region $\mathcal{O}$ within $T$ steps, if and only if the following condition holds for $t = 1,\dots,T$:
\begin{equation}\label{equ:verify}
    \begin{aligned}
        \!\!\min &\left\{ \|\bm{\xi}^c\|_{\infty} \left | \mat{\A_t^c \!\!& \!\! \mathbf{0}\\ \mathbf{0} \!\!& \!\! \A_0^c \\ \G_t^c \!\!& \!\! -\G_0^c}\bm{\xi}^c  \!+\! \mat{\A_t^b \!\!& \!\! \mathbf{0}\\ \mathbf{0} \!\!& \!\! \A_0^b \\ \G_t^b \!\!& \!\! -\G_0^b}\bm{\xi}^b\right.\right.\\ 
&\left.= \! \mat{\bb_t\\ \bb_0\\ \cc_0 - \cc_t},\bm{\xi}^c\in\mathbb{R}^{n_{g,t}}, \bm{\xi}^b\in\{-1,1\}^{n_{b,t}} \right\} \!>\! 1.
    \end{aligned}
\end{equation}
\end{proposition}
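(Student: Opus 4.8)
The plan is to unwind the definitions so that the claimed MILP feasibility condition \eqref{equ:verify} becomes a literal restatement of ``$\mathcal{X}_0 \cap \mathcal{P}_t = \emptyset$ for all $t=1,\dots,T$'', and then invoke the two tools already at hand: the generalized-intersection identity \cite[Proposition 7]{bird2021hybrid} and the emptiness test of Lemma \ref{lemma:empty}. First I would observe that a state trajectory of \eqref{close-sys} starting in $\mathcal{X}_0$ reaches the unsafe set $\mathcal{O}$ at some step $k\le T$ if and only if its initial state lies in $\mathcal{P}_k(\mathcal{O})$; this is exactly the defining property of the $t$-step BRS and the recursion \eqref{equ:t-step}. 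Hence the system is safe within $T$ steps if and only if $\mathcal{X}_0 \cap \mathcal{P}_t = \emptyset$ for every $t=1,\dots,T$. (Strictly, one should note $\mathcal{X}_0\subset\mathcal{X}$ so that the trajectory stays in the state set over which $\mathcal{P}_t$ was computed; this is assumed in the statement.)

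Next I would make the intersection explicit as a hybrid zonotope. Using the factored forms $\mathcal{X}_0 = \{\cc_0 + \G^c_0\bxi^c_0 + \G^b_0\bxi^b_0 \mid (\bxi^c_0,\bxi^b_0)\in\mathcal{B}(\A^c_0,\A^b_0,\bb_0)\}$ and the analogous one for $\mathcal{P}_t$, a point $\x$ lies in $\mathcal{X}_0\cap\mathcal{P}_t$ iff there exist factor vectors for both sets that produce the same $\x$, i.e. $\cc_t + \G^c_t\bxi^c_t + \G^b_t\bxi^b_t = \cc_0 + \G^c_0\bxi^c_0 + \G^b_0\bxi^b_0$, together with the two constraint memberships. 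Stacking $\bxi^c = [(\bxi^c_t)^T\;(\bxi^c_0)^T]^T$ and $\bxi^b = [(\bxi^b_t)^T\;(\bxi^b_0)^T]^T$ and collecting the constraints $\A^c_t\bxi^c_t+\A^b_t\bxi^b_t=\bb_t$, $\A^c_0\bxi^c_0+\A^b_0\bxi^b_0=\bb_0$, and the matching equation $\G^c_t\bxi^c_t - \G^c_0\bxi^c_0 + \G^b_t\bxi^b_t - \G^b_0\bxi^b_0 = \cc_0-\cc_t$, one reads off precisely the constraint matrices appearing in \eqref{equ:verify}; this is the same bookkeeping already carried out in the proof of Theorem \ref{thm:brs} (it is the special case $\bm D = \bm I$, with $\mathcal{X}_0$ in place of the graph set). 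Thus $\mathcal{X}_0\cap\mathcal{P}_t$ is the HZ whose constraint data are exactly the matrices and vector in \eqref{equ:verify} and whose generator data are irrelevant to emptiness.

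Finally I would apply Lemma \ref{lemma:empty} to this HZ: it is nonempty iff the minimum of $\|\bxi^c\|_\infty$ over the stacked constraint set with $\bxi^b\in\{-1,1\}^{n_{b,t}}$ is $\le 1$. Negating, $\mathcal{X}_0\cap\mathcal{P}_t=\emptyset$ iff that minimum is $>1$ (or the feasible set is empty, in which case the min is conventionally $+\infty>1$). Conjoining over $t=1,\dots,T$ gives the stated iff. The only genuinely delicate point is the first step — arguing that reaching $\mathcal{O}$ within $T$ steps is equivalent to $\x(0)\in\bigcup_{t=1}^T\mathcal{P}_t(\mathcal{O})$, which relies on the exactness of each one-step BRS (Theorem \ref{thm:brs}) and on $\mathcal{X}_0\subseteq\mathcal{X}$ so that the domain $\mathcal{X}$ used to build $\mathcal{H}_\pi$ never clips a relevant trajectory; everything after that is the routine stacking of constraints plus a direct invocation of Lemma \ref{lemma:empty}. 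Since the proposition is stated with its proof omitted for space, I would present exactly this chain at the level of detail above.
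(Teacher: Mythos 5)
Your proposal is correct and follows exactly the route the paper sketches in the paragraph preceding the proposition (whose formal proof is omitted for space): safety within $T$ steps is equivalent to $\mathcal{X}_0\cap\mathcal{P}_t=\emptyset$ for $t=1,\dots,T$, the intersection is written as an HZ via the generalized-intersection identity of \cite[Proposition 7]{bird2021hybrid}, and Lemma \ref{lemma:empty} converts its emptiness into the MILP condition \eqref{equ:verify}. No gaps; your added remarks on the exactness of the BRS and the $\min=+\infty$ convention for an infeasible program are sensible points of rigor.
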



\begin{remark}\label{rmk:relax}
    Denote the number of continuous generators, binary generators and equality constraints of the HZ $\mathcal{O}$ (resp. $\mathcal{X}_0$) as $n_{g,o}$, $n_{b,o}$ and $n_{c,o}$ (resp. $n_{g,0}$, $n_{b,0}$ and $n_{c,0}$), respectively. 
    The $T$ MILPs in \eqref{equ:verify} include $n_{g,t}$ continuous variables, $n_{b,t}$ binary variables, and $n_{c,t}$ linear constraints, where $n_{g,t} = t\cdot(n_{g,x}+6N_\pi) + n_{g,o}+n_{g,0}$, $n_{b,t} = t\cdot(n_{b,x}+N_\pi) + n_{b,o}+n_{b,0}$ and $n_{c,t} = t\cdot(n_{c,x}+5N_\pi+n) + n_{c,o}+n_{c,0}+n$.
    Commercial solvers such as Gurobi \cite{gurobi} have shown promising performance in solving MILPs. 
    To further reduce the computation burden, we can use Lemma 5 in \cite{zhang2022reachability} to get the tightest convex relaxation of the exact BRS $\mathcal{P}_t$ by replacing the binary generators with continuous generators. If relaxed BRSs are used in Proposition \ref{prop:verify}, \eqref{equ:verify} will degenerate into linear programs which are much easier to solve. 
\end{remark}

\section{Simulation Examples}\label{sec:sim}
In this section, 
two simulation examples will be presented to 
demonstrate the effectiveness of the proposed method. The method proposed in this work is implemented in MATLAB R2022a and executed on a desktop with an Intel Core i9-12900k CPU and 16GB of RAM.


\begin{example}[Damped Pendulum Model]\label{example1}
Consider the damped pendulum model given in \cite{vincent2021reachable}. A fully-connected FNN with ReLU activation functions and one hidden layer of 12 neurons was trained to approximate the discrete-time dynamics of the pendulum. The learned dynamics is
$
    \mt{x}(t+1) = \f_{NN}(\mt{x}(t))
$ 
where $\mt{x} = [\mt{\theta}, \dot{\mt{\theta}}]^\top$ and $\f_{NN}(\cdot)$ is the trained FNN. We chose the target set as $\mathcal{T} = [-10,10]\times[-30,30]$, the state set as $\mathcal{X} = [-90,90]\times[-90,90]$, and $\alpha = \beta = 100$.

Using Theorem \ref{thm:brs} and equations \eqref{equ:t-step}, 50 exact BRSs  $\mathcal{P}_{1}(\mathcal{T})$, $\mathcal{P}_{2}(\mathcal{T})$, $\dots,\mathcal{P}_{50}(\mathcal{T})$ were computed within 0.891 seconds. Figure \ref{fig:pendulum} illustrates the set $\mathcal{P}_{5}(\mathcal{T})$, which is the union of 31 polytopes, in the $\theta$-$\dot\theta$ plane. Although $\mathcal{P}_{5}(\mathcal{T})$ computed using \eqref{equ:t-step} has 60 binary variables, only 31 binary value combinations satisfy the linear equality constraints of the HZ, which correspond to the resulting 31 polytopes. 
We also ran the RPM Algorithm developed in \cite{vincent2021reachable} which produced the same exact BRSs as our method. The computation time of the RPM method in Julia is 69.844 seconds for $T = 50$.

\begin{figure}[t]
    \centering
    \includegraphics[width=\linewidth]{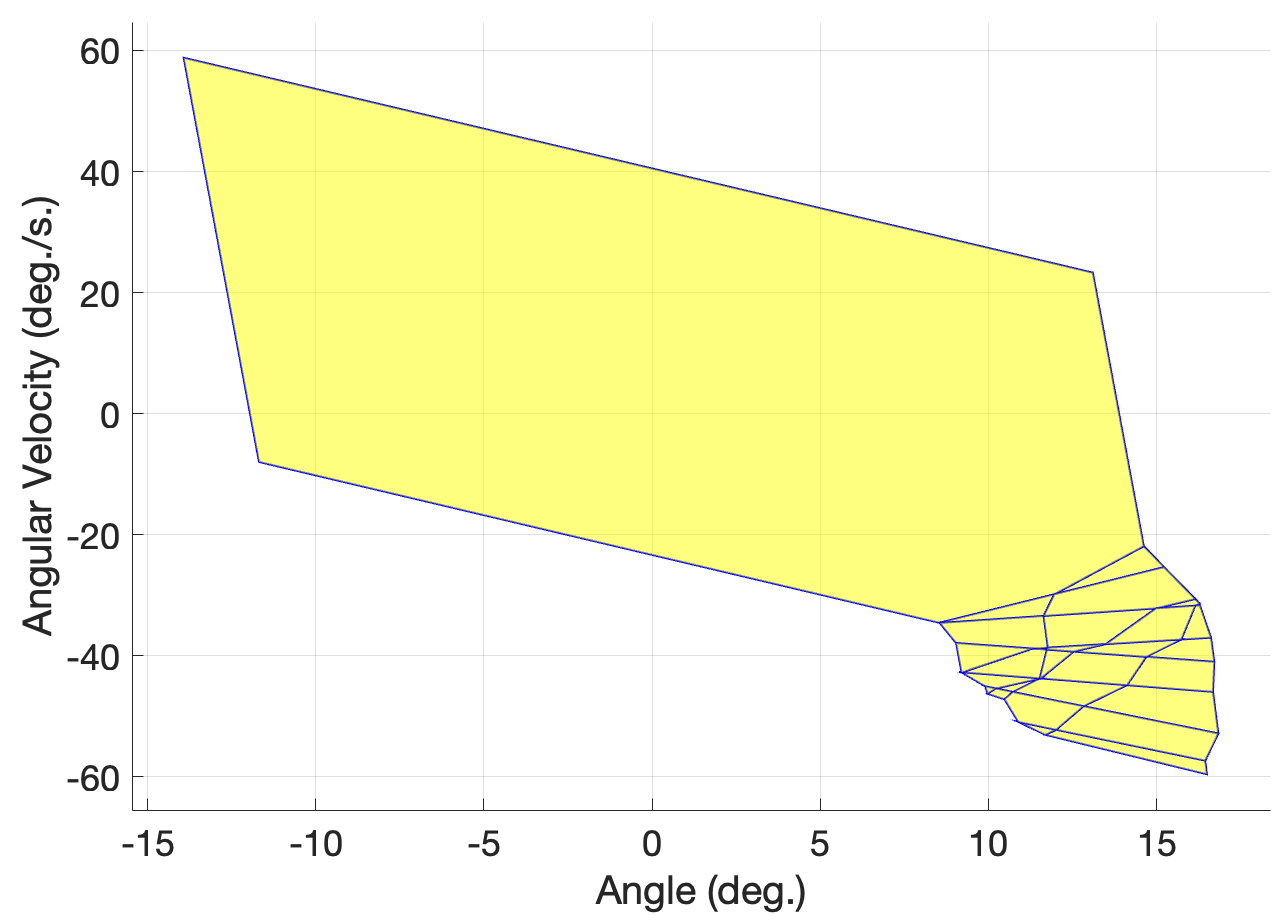}
    \caption{The exact 5-step BRS $\mathcal{P}_{5}(\mathcal{T})$ in Example \ref{example1} by using our proposed method. The target set $\mathcal{T}$ is chosen as $\mathcal{T} = [-10,10]\times[-30,30]$.}
    \label{fig:pendulum}
    \vspace{-0.35cm}
\end{figure}

\end{example}

\begin{example}[Double Integrator Model]\label{example2}
Consider the discrete-time double integrator model given in \cite{Fazlyab2022Safety}:
$$
    \mt{x}(t+1) = \mat{1 & 1 \\ 0 & 1}\mt{x}(t) + \mat{0.5 \\ 1}\mt{u}(t).
$$ 
The NN controller $\u(t) = \pi(\x(t))$  has two hidden layers with ReLU activation functions and $[10, 5]$ neurons. Similar to \cite{rober2022backward}, this NN controller was trained using the dataset generated by an MPC controller. In addition, we imposed the saturation bounds $\underline{\u}=-1,\overline{\u}=1$ on the controller, i.e., $\u(t) \in\mathcal{U} = [-1,1]$. We chose the initial set as $\mathcal{X}_0 = [-1.25,0.25]\times[0.4,0.6]$, the unsafe region as  $\mathcal{O} = [4.5,5.0]\times[-0.25,0.25]$, the state region as $\mm{X} = [-40, 40]\times[-40, 40]$, and $\alpha=\beta=400$. 

We implemented Theorem \ref{thm:brs} and equations \eqref{equ:t-step} to compute 5 exact BRSs  $\mathcal{P}_{1}(\mathcal{T})$, $\mathcal{P}_{2}(\mathcal{T})$, $\dots,\mathcal{P}_{5}(\mathcal{T})$ which are shown by the sets in cyan in Figure \ref{fig:di_10_5}. We also verified that condition \eqref{equ:verify} in Proposition \ref{prop:verify} holds true, which implies the safety of the neural feedback system.  The time for computing the BRSs is 0.007 seconds, and the time for solving the MILPs given in \eqref{equ:verify} via the commercial solver Gurobi is 0.560 seconds \cite{gurobi}.

For comparison, we also ran the BReach-LP algorithm and the ReBReach-LP algorithm proposed in \cite{rober2022backward}, which were implemented in Python with default parameters provided by the authors of \cite{rober2022backward}. The computed BRSs are shown by the rectangles with orange and magenta lines in Figure \ref{fig:di_10_5}.  
It can be observed that our method provides more accurate BRSs for all the time steps compared with the BReach-LP and the ReBReach-LP algorithms. In addition, the exact BRSs computed by our method certify safety in this scenario, while the over-approximated BRSs computed by the two algorithms given in \cite{rober2022backward} lead to false unsafe detection.

To verify the exactness of the BRSs by our method, we performed numerical simulations on trajectories generated from uniformly sampled initial conditions and selected the samples based on the criterion that the resulting trajectories would enter the target set within 5 steps. 
The selected samples are depicted by blue dots in Figure \ref{fig:di_10_5}. It can be observed that the sampled points are contained in our BRSs as expected. 



\begin{figure}[t]
    \centering
    \includegraphics[width=\linewidth]{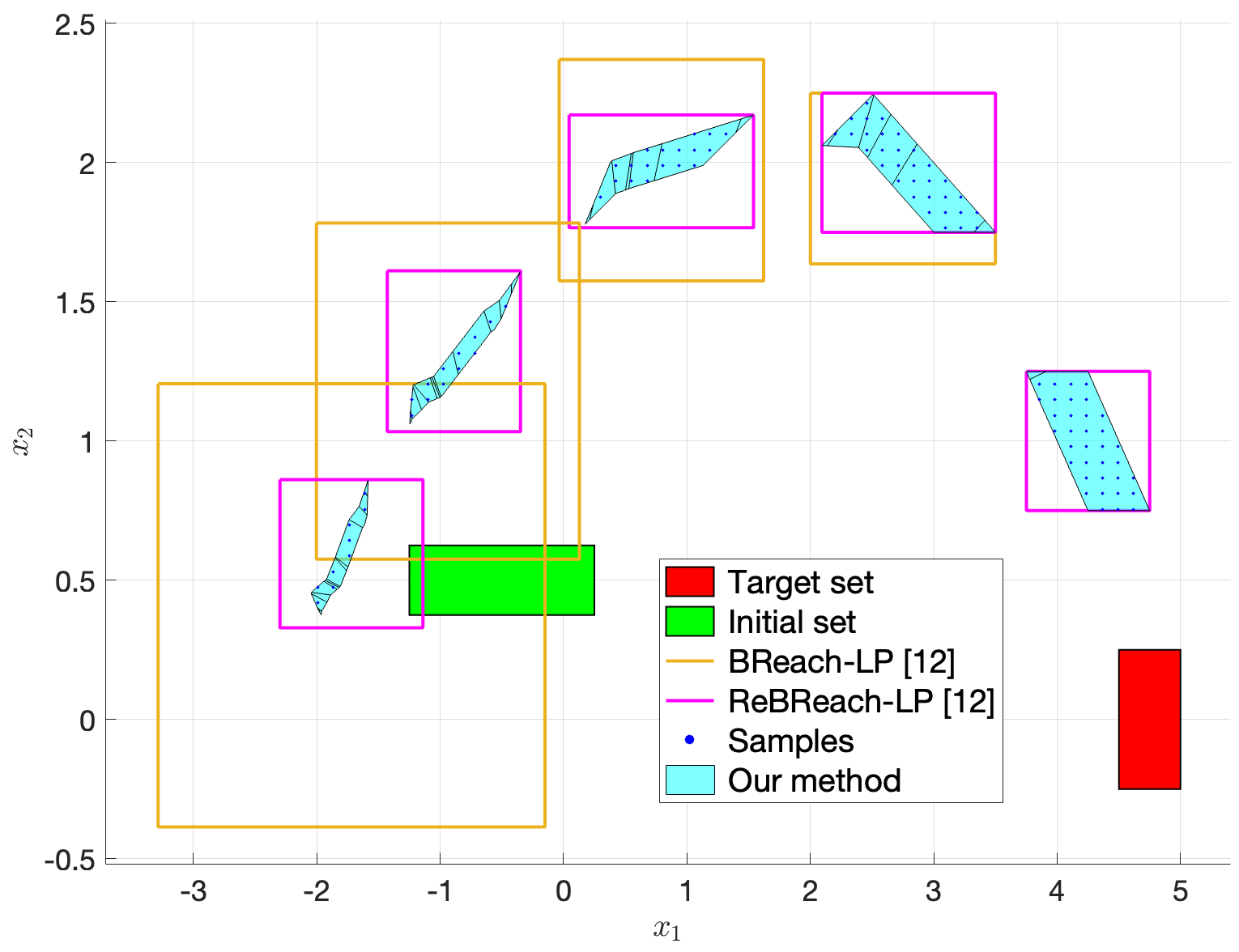}
    \caption{Simulation results in Example \ref{example2}. The exact BRSs computed by our HZ-based approach are shown in cyan. Over-approximated BRSs computed by BReach-LP and ReBReach-LP algorithms in \cite{rober2022backward} are bounded by orange and magenta lines, respectively. The target set as the unsafe region is in red and the initial set is in green. Sampled states are plotted as blue dots which are bounded by all the exact BRSs.}
    \label{fig:di_10_5}
    \vspace{-0.35cm}
\end{figure}

\end{example}


\section{Conclusion}\label{sec:concl}
We proposed a novel HZ-based approach to compute the exact BRSs of neural feedback systems. We showed that the input-output relationship of a ReLU-activated FNN can be exactly described by its graph set represented by an HZ. We provided an exact HZ formulation for the BRSs of neural feedback systems and extended the result to the saturated input case. We also proposed a sufficient and necessary condition in the form of MILPs for the safety verification of neural feedback systems via BRSs. The performance of the proposed approach was compared with state-of-the-art using two numerical examples.

\bibliographystyle{IEEEtran}
\bibliography{ref}

\begin{thebibliography}{10}
\providecommand{\url}[1]{#1}
\csname url@samestyle\endcsname
\providecommand{\newblock}{\relax}
\providecommand{\bibinfo}[2]{#2}
\providecommand{\BIBentrySTDinterwordspacing}{\spaceskip=0pt\relax}
\providecommand{\BIBentryALTinterwordstretchfactor}{4}
\providecommand{\BIBentryALTinterwordspacing}{\spaceskip=\fontdimen2\font plus
\BIBentryALTinterwordstretchfactor\fontdimen3\font minus
  \fontdimen4\font\relax}
\providecommand{\BIBforeignlanguage}[2]{{%
\expandafter\ifx\csname l@#1\endcsname\relax
\typeout{** WARNING: IEEEtran.bst: No hyphenation pattern has been}%
\typeout{** loaded for the language `#1'. Using the pattern for}%
\typeout{** the default language instead.}%
\else
\language=\csname l@#1\endcsname
\fi
#2}}
\providecommand{\BIBdecl}{\relax}
\BIBdecl

\bibitem{yuan2019adversarial}
X.~Yuan, P.~He, Q.~Zhu, and X.~Li, ``Adversarial examples: Attacks and defenses
  for deep learning,'' \emph{IEEE Transactions on Neural Networks and Learning
  Systems}, vol.~30, no.~9, pp. 2805--2824, 2019.

\bibitem{goodfellow2014explaining}
I.~J. Goodfellow, J.~Shlens, and C.~Szegedy, ``Explaining and harnessing
  adversarial examples,'' in \emph{Proceedings of the 3rd International
  Conference on Learning Representations}, 2015.

\bibitem{bekey2012neural}
G.~A. Bekey and K.~Y. Goldberg, \emph{Neural Networks in Robotics}.\hskip 1em
  plus 0.5em minus 0.4em\relax Springer Science \& Business Media, 2012, vol.
  202.

\bibitem{rao2018deep}
Q.~Rao and J.~Frtunikj, ``Deep learning for self-driving cars: Chances and
  challenges,'' in \emph{Proceedings of the 1st International Workshop on
  Software Engineering for AI in Autonomous Systems}, 2018, pp. 35--38.

\bibitem{katz2017reluplex}
G.~Katz, C.~Barrett, D.~L. Dill, K.~Julian, and M.~J. Kochenderfer, ``Reluplex:
  An efficient {SMT} solver for verifying deep neural networks,'' in \emph{29th
  International Conference on Computer Aided Verification}.\hskip 1em plus
  0.5em minus 0.4em\relax Springer, 2017, pp. 97--117.

\bibitem{dutta2019reachability}
S.~Dutta, X.~Chen, and S.~Sankaranarayanan, ``Reachability analysis for neural
  feedback systems using regressive polynomial rule inference,'' in
  \emph{Proceedings of the 22nd ACM International Conference on Hybrid Systems:
  Computation and Control}, 2019, pp. 157--168.

\bibitem{huang2019reachnn}
C.~Huang, J.~Fan, W.~Li, X.~Chen, and Q.~Zhu, ``Reach{NN}: Reachability
  analysis of neural-network controlled systems,'' \emph{ACM Transactions on
  Embedded Computing Systems}, vol.~18, no.~5s, pp. 1--22, 2019.

\bibitem{everett2021reachability}
M.~Everett, G.~Habibi, C.~Sun, and J.~P. How, ``Reachability analysis of neural
  feedback loops,'' \emph{IEEE Access}, vol.~9, pp. 163\,938--163\,953, 2021.

\bibitem{tran2020nnv}
H.-D. Tran, X.~Yang, D.~Manzanas~Lopez, P.~Musau, L.~V. Nguyen, W.~Xiang,
  S.~Bak, and T.~T. Johnson, ``{NNV}: The neural network verification tool for
  deep neural networks and learning-enabled cyber-physical systems,'' in
  \emph{32nd International Conference on Computer-Aided Verification}.\hskip
  1em plus 0.5em minus 0.4em\relax Springer, 2020, pp. 3--17.

\bibitem{Fazlyab2022Safety}
M.~Fazlyab, M.~Morari, and G.~J. Pappas, ``Safety verification and robustness
  analysis of neural networks via quadratic constraints and semidefinite
  programming,'' \emph{IEEE Transactions on Automatic Control}, vol.~67, no.~1,
  pp. 1--15, 2022.

\bibitem{zhang22constrainedzonotope}
Y.~Zhang and X.~Xu, ``Safety verification of neural feedback systems based on
  constrained zonotopes,'' in \emph{IEEE 61st Conference on Decision and
  Control}, 2022, pp. 2737--2744.

\bibitem{rober2022backward}
N.~Rober, M.~Everett, and J.~P. How, ``Backward reachability analysis for
  neural feedback loops,'' in \emph{IEEE 61st Conference on Decision and
  Control}, 2022, pp. 2897--2904.

\bibitem{rober2022hybrid}
N.~Rober, M.~Everett, S.~Zhang, and J.~P. How, ``A hybrid partitioning strategy
  for backward reachability of neural feedback loops,'' \emph{arXiv preprint
  arXiv:2210.07918}, 2022.

\bibitem{vincent2021reachable}
J.~A. Vincent and M.~Schwager, ``Reachable polyhedral marching ({RPM}): A
  safety verification algorithm for robotic systems with deep neural network
  components,'' in \emph{IEEE International Conference on Robotics and
  Automation}, 2021, pp. 9029--9035.

\bibitem{mitchell2005time}
I.~Mitchell, A.~Bayen, and C.~Tomlin, ``A time-dependent {H}amilton-{J}acobi
  formulation of reachable sets for continuous dynamic games,'' \emph{IEEE
  Transactions on Automatic Control}, vol.~50, no.~7, pp. 947--957, 2005.

\bibitem{althoff2021set}
M.~Althoff, G.~Frehse, and A.~Girard, ``Set propagation techniques for
  reachability analysis,'' \emph{Annual Review of Control, Robotics, and
  Autonomous Systems}, vol.~4, pp. 369--395, 2021.

\bibitem{yang2022efficient}
L.~Yang, H.~Zhang, J.-B. Jeannin, and N.~Ozay, ``Efficient backward
  reachability using the {M}inkowski difference of constrained zonotopes,''
  \emph{IEEE Transactions on Computer-Aided Design of Integrated Circuits and
  Systems}, vol.~41, no.~11, pp. 3969--3980, 2022.

\bibitem{bird2021hybrid}
T.~J. Bird, H.~C. Pangborn, N.~Jain, and J.~P. Koeln, ``Hybrid zonotopes: A new
  set representation for reachability analysis of mixed logical dynamical
  systems,'' \emph{arXiv preprint arXiv:2106.14831}, 2021.

\bibitem{siefert2022robust}
J.~A. Siefert, T.~J. Bird, J.~P. Koeln, N.~Jain, and H.~C. Pangborn, ``Robust
  successor and precursor sets of hybrid systems using hybrid zonotopes,''
  \emph{IEEE Control Systems Letters}, vol.~7, pp. 355--360, 2022.

\bibitem{bird2022set}
T.~J. Bird, N.~Jain, H.~C. Pangborn, and J.~P. Koeln, ``Set-based reachability
  and the explicit solution of linear {MPC} using hybrid zonotopes,'' in
  \emph{American Control Conference}.\hskip 1em plus 0.5em minus 0.4em\relax
  IEEE, 2022, pp. 158--165.

\bibitem{zhang2022reachability}
\BIBentryALTinterwordspacing
Y.~Zhang and X.~Xu, ``Reachability analysis and safety verification of neural
  feedback systems via hybrid zonotopes,'' in \emph{American Control
  Conference}.\hskip 1em plus 0.5em minus 0.4em\relax IEEE, 2023 (to appear).
  [Online]. Available: \url{https://arxiv.org/abs/2210.03244}
\BIBentrySTDinterwordspacing

\bibitem{bird2021unions}
T.~J. Bird and N.~Jain, ``Unions and complements of hybrid zonotopes,''
  \emph{IEEE Control Systems Letters}, vol.~6, pp. 1778--1783, 2021.

\bibitem{bird2022hybrid}
T.~J. Bird, ``Hybrid zonotopes: A mixed-integer set representation for the
  analysis of hybrid systems,'' \emph{Purdue University Graduate School}, 2022.

\bibitem{keerthi1987computation}
S.~Keerthi and E.~Gilbert, ``Computation of minimum-time feedback control laws
  for discrete-time systems with state-control constraints,'' \emph{IEEE
  Transactions on Automatic Control}, vol.~32, no.~5, pp. 432--435, 1987.

\bibitem{blanchini2008set}
F.~Blanchini and S.~Miani, \emph{Set-Theoretic Methods in Control}.\hskip 1em
  plus 0.5em minus 0.4em\relax Springer, 2008, vol.~78.

\bibitem{anevlavis2019computing}
T.~Anevlavis and P.~Tabuada, ``Computing controlled invariant sets in two
  moves,'' in \emph{IEEE 58th Conference on Decision and Control}, 2019, pp.
  6248--6254.

\bibitem{yin2021stability}
H.~Yin, P.~Seiler, and M.~Arcak, ``Stability analysis using quadratic
  constraints for systems with neural network controllers,'' \emph{IEEE
  Transactions on Automatic Control}, vol.~67, no.~4, pp. 1980--1987, 2021.

\bibitem{rossig2021advances}
A.~R{\"o}ssig and M.~Petkovic, ``Advances in verification of {R}e{LU} neural
  networks,'' \emph{Journal of Global Optimization}, vol.~81, pp. 109--152,
  2021.

\bibitem{gurobi}
\BIBentryALTinterwordspacing
{Gurobi Optimization, LLC}, ``Gurobi optimizer reference manual,'' 2022.
  [Online]. Available: \url{https://www.gurobi.com}
\BIBentrySTDinterwordspacing

\end{thebibliography}

\end{document}